\newtheorem{proposition}{Proposition}[section]
\newtheorem{theorem}[proposition]{Theorem}
\theoremstyle{definition}
\newtheorem{definition}[proposition]{Definition}
\newtheorem{remark}[proposition]{Remark}
\numberwithin{equation}{section}
\renewcommand{\div}{\mathop\mathrm{div}}
\begin{document}
\title
[Navier--Stokes--Voigt system in $\mathbb R^4$]
 {Attractor of the  limiting Navier--Stokes--Voigt system in $\mathbb R^4$}

 \author[ A. Ilyin, V. Kalantarov
and  S. Zelik] { Alexei Ilyin${}^1$, Varga Kalantarov${}^{2,3}$ and Sergey
Zelik${}^{1,4,5,6}$}

\subjclass{35Q30, 35B41, 37L30} 

\keywords{Navier--Stokes system, Voight regularization, attractors, fractal dimension,
orthonormal systems}

\email{ilyin@keldysh.ru}\email{vkalantarov@ku.edu.tr}\email{s.zelik@surrey.ac.uk}
\address{${}^1$ Keldysh Institute of Applied Mathematics, Moscow, Russia}
\address{${}^2$ Ko\c{c} University, Istanbul, Turkey}
\address{${}^3$ Azerbaijan Technical University, Baku, Azerbaijan}
\address{${}^4$ Zhejiang Normal University, Department of Mathematics, Zhejiang, China}
\address{${}^5$ University of Surrey, Department of Mathematics, Guildford, GU2 7XH, United
Kingdom}
\address{${}^6$ HSE University, Nizhny Novgorod, Russia}

\begin{abstract}
The  Navier--Stokes--Voigt system in the whole four-dimensional
space is considered. Although we do not know any physical reasons
to consider this system in space dimension four, the attractors
theory for this case becomes especially simple and elegant and
nothing similar happens when the space dimension is different than
four. These notes are devoted to developing this theory, including
well-posedness, dissipativity, existence of a global attractor and
estimates for its dimension.
\end{abstract}

\maketitle
 \setcounter{equation}{0}

\section{Statement of the problem and main result} \label{sec1}

The last decades have seen considerable interest in various
regularized models in incompressible hydrodynamics.  These models
have much better properties as far as the problem
of wellposedness is concerned. In practice, they can be used
in large scale modeling, where the small spatial scales
are naturally filtered out.

Without claiming to be complete, we point out the works
\cite{ChepTitiVishik,CHOT,CotiGal,FHT, TitiVarga}
and the references therein. Typical
problems from the point of view of attractors  are the
construction of the attractor of the
regularized system,
estimates of its dimension and the study of  the singular limits,
in other words, proving   its convergence
to a weak attractor of the initial three-dimensional system.

One of these models is the Navier--Stokes--Voigt model:
\begin{equation*}\label{DEalpha}
\left\{
  \begin{array}{ll}
    (1-\alpha \Delta)\partial_t u+( u,\nabla u) u-\nu\Delta u+\nabla p=g,\  \  \\
    \operatorname{div}  u=0,\quad u(0)=u_0,\\
  \end{array}
\right.
\end{equation*}
where $g$ is the given forcing term, and $\nu$ is the kinematic
viscosity coefficient, and system is studied either under Dirichlet
or periodic boundary conditions in dimension $d=2,3$. The parameter
$\alpha$ has the dimension of length squared   and determines the
scale at which high-frequency spatial modes will be filtered out.

The well-posedness of this system goes back to \cite{Oskol}. The
best to date estimates for the dimension of the attractor were
obtained recently in \cite{IZLob}, improving the previously known
results in \cite{TitiVarga} and  \cite{CotiGal} in the 3D case,
while in the 2D case it was shown that the estimates so obtained
converge as $\alpha\to0$ to the well-known estimates for the
classical Navier--Stokes equations both  under  the Dirichlet and
periodic boundary conditions. The key technical tool there were
inequalities for systems of functions with orthonormal derivatives
\cite{LiebJFA} that were further developed in \cite{IKZ22,IZLap70,
IKZMZ22,ZIMS23} and made it possible to obtain optimal bounds for the
attractor dimension for a similar model, namely, the regularized
damped   Euler system both in 2D and 3D, as well as for another
classical example in the theory, namely, the weakly damped
nonlinear hyperbolic system~\cite{IKZIZV}.

Thus, the Voigt regularization of the  Navier--Stokes equations is becoming
more and more popular nowadays and there is a vast number of works
devoted to studying this regularization in the limit $\alpha\to~0$
from different points of view
 including the one, related with the attractors theory.

 Of course, the most interesting from the point of view of
applications are the 3D or 2D cases, but the higher dimensional
cases may be interesting from the mathematical point of view. One
of such results is presented here.

Namely, as we discovered, the Navier--Stokes--Voigt in
$\mathbb R^4$ has ideal properties which allows us to get the main results
of the attractors theory (including the finite-dimensionality and
effective estimates for the dimension of the attractor) in a very
transparent way. Actually, we do not know how to get anything
similar in the case of $\mathbb R^d$, $d\ne4$.

Furthermore, instead of the limit $\alpha\to0$ in this work we are
interested in the opposite case when $\alpha\to\infty$. So after
rescaling the time we obtain the system
\begin{equation}\label{1}
\left\{
  \begin{array}{ll}
    -\Delta \partial_t u+(u,\nabla )u+\nabla  p=
\nu\Delta u+g,\  \  \\
    \operatorname{div}  u=0,\quad u(0)=u_0,\\
  \end{array}
\right.
\end{equation}
where $x\in\Omega=\mathbb R^4$.

The phase space the system is studied in is the homogeneous
Sobolev space $\dot H^1_\sigma$ of divergence free vector functions
with the norm
$$
\|u\|_{\dot H^1_\sigma}=\|\nabla u\|_{L^2}.
$$
The main result of these short notes is the existence of a  a global
attractor $\mathscr A\subset~\dot H^1_\sigma$ for the semigroup associated with equation \eqref{1} and the following upper bound for its fractal dimension. 
\begin{theorem}\label{Th:3}
 Let $g\in\dot H^{-1}_\sigma$.
 Then the fractal dimension of the attractor $\mathscr A$
 associated with problem \eqref{1} is finite
 and possesses the following upper bound:
\begin{equation}\label{10}
\dim_f\mathscr A\le\frac{12L_{0,4}\|g\|^2_{\dot H^{-1}}}{\nu^4}
\le 0.23\cdot\frac{\|g\|^2_{\dot H^{-1}}}{\nu^4} ,
\end{equation}
where $L_{0,4}$ is the constant in the Cwikel--Lieb--Rosenblum
inequality.
\end{theorem}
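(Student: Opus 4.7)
The plan is to carry out the classical programme for a dissipative semigroup (well-posedness, absorbing ball, asymptotic compactness, attractor, volume contraction), with the Cwikel--Lieb--Rosenblum (CLR) inequality in $\mathbb R^4$ doing the decisive work in the dimension estimate. Applying the Leray projector $P$ and inverting $-\Delta$, one rewrites \eqref{1} as the evolution equation
\[
\partial_t u=-\nu u+(-\Delta)^{-1}P\bigl(g-(u,\nabla)u\bigr)
\]
in $\dot H^1_\sigma$. In $\mathbb R^4$ the Sobolev embedding $\dot H^1\hookrightarrow L^4$ and duality give $L^{4/3}\hookrightarrow\dot H^{-1}$, so the map $u\mapsto(u,\nabla)u$ from $\dot H^1_\sigma$ to $\dot H^{-1}$ is locally Lipschitz. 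Standard ODE theory then yields a continuous semigroup $S(t)$ on $\dot H^1_\sigma$, while testing \eqref{1} by $u$ (the trilinear term vanishing thanks to $\operatorname{div} u=0$) gives
\[
\tfrac12\tfrac{d}{dt}\|\nabla u\|_{L^2}^2+\nu\|\nabla u\|_{L^2}^2=(g,u)_{L^2}\le\|g\|_{\dot H^{-1}}\|\nabla u\|_{L^2},
\]
and hence both a uniform absorbing ball in $\dot H^1_\sigma$ and the mean-value bound $\langle\|\nabla u\|_{L^2}^2\rangle\le\|g\|_{\dot H^{-1}}^2/\nu^2$ along any invariant trajectory.

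Because $\mathbb R^4$ is unbounded, the Sobolev embedding is not compact and asymptotic compactness of $S(t)$ needs an extra argument. I would use a tail-cutoff procedure: a weighted energy inequality applied to $(1-\chi_R)u$ for a smooth radial cut-off $\chi_R$ shows that the $\dot H^1$-mass of the tail tends to zero uniformly in the initial data as $R\to\infty$, while inside a fixed ball the Voigt operator $-\Delta\partial_t$ permits higher-order regularity (for example by testing with $-\Delta u$) and hence local strong compactness. Combined with the continuity of $S(t)$ this gives the global attractor $\mathscr A\subset\dot H^1_\sigma$.

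For the dimension bound I linearise \eqref{1} around a trajectory $u(t)$: $\partial_t\theta=L(u)\theta$ with
\[
L(u)\theta=-\nu\theta-(-\Delta)^{-1}P\bigl((\theta,\nabla)u+(u,\nabla)\theta\bigr).
\]
For any family $\theta_1,\dots,\theta_N$ orthonormal in $\dot H^1_\sigma$, the identity $(L(u)\theta,\theta)_{\dot H^1}=(-\Delta L(u)\theta,\theta)_{L^2}$ and the cancellation $((u,\nabla)\theta,\theta)=0$ give
\[
\sum_{j=1}^N(L(u)\theta_j,\theta_j)_{\dot H^1}=-\nu N-\sum_{j=1}^N\int_{\mathbb R^4}(\theta_j,\nabla)u\cdot\theta_j\,dx.
\]
Pointwise Cauchy--Schwarz bounds the last sum by $\|\nabla u\|_{L^2}\|\rho\|_{L^2}$ with $\rho(x)=\sum_j|\theta_j(x)|^2$. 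The key analytic input is the dual form of the four-dimensional CLR inequality for systems with orthonormal gradients: $\|\rho\|_{L^2(\mathbb R^4)}^2\le C\,L_{0,4}\,N$, where the explicit constant $C$ incorporates both the vector-valued character of the $\theta_j$'s and, if used, the strain-tensor refinement $\|e(u)\|_{L^2}^2=\tfrac12\|\nabla u\|_{L^2}^2$. Putting the pieces together,
\[
\sum_{j=1}^N(L(u)\theta_j,\theta_j)_{\dot H^1}\le -\nu N+\sqrt{C\,L_{0,4}\,N}\,\|\nabla u\|_{L^2}.
\]

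Time-averaging this trace estimate along a trajectory in $\mathscr A$ and invoking the first-step bound $\langle\|\nabla u\|_{L^2}\rangle\le\|g\|_{\dot H^{-1}}/\nu$ yields the global Lyapunov exponent estimate $\omega_N\le-\nu N+\sqrt{CL_{0,4}N}\,\|g\|_{\dot H^{-1}}/\nu$, which becomes negative once $N$ exceeds a constant times $L_{0,4}\|g\|_{\dot H^{-1}}^2/\nu^4$; the Constantin--Foias--Temam volume-contraction theorem then delivers $\dim_f\mathscr A\le N$. Tracking the constant $C$ through the argument is what converts this bound into the announced coefficient $12L_{0,4}$ in \eqref{10}, and the numerical value $0.23$ follows from the best available upper estimate on $L_{0,4}$. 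The main difficulty I anticipate is precisely this constant tracking: producing the sharp factor $12$ (rather than a looser one) requires applying the CLR inequality in its correct vector-valued form, probably combined with the strain refinement to exploit the divergence-free structure of $u$. The unbounded-domain compactness step, although technically involved, is conceptually routine.
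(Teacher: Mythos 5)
Your overall strategy for the dimension bound is exactly the paper's: volume contraction, the trace identity $\sum_j(\mathcal L v_j,v_j)_{\dot H^1}=-\nu n-\sum_j((v_j,\nabla)u,v_j)$, an estimate of the lower-order term by $\|\nabla u\|_{L^2}\|\rho\|_{L^2}$ with $\rho=\sum_j|v_j|^2$, and a CLR-based bound for systems with orthonormal gradients. Two concrete gaps remain. First, the constant you leave as $C$ is the entire content of the explicit coefficient $12L_{0,4}$, and in the paper it comes from two identified ingredients rather than a ``strain-tensor refinement'': (i) Lieb's pointwise inequality $(Av,v)\le c_d\|A\|\,|v|^2$ with $c_d=\sqrt{(d-1)/d}$ for trace-free matrices $A=\nabla u$ (this is where $\operatorname{div}u=0$ enters), giving $c_4=\sqrt3/2$ instead of the constant $1$ that plain pointwise Cauchy--Schwarz yields; and (ii) the bound $\|\rho\|_{L^{d/(d-2)}}\le(dL_{0,d})^{2/d}\tfrac d{d-2}\,n^{(d-2)/d}$ of Theorem~\ref{T:Lieb_suborth}, which for $d=4$ reads $\|\rho\|_{L^2}\le 4L_{0,4}^{1/2}n^{1/2}$. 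Note that $d=4$ is precisely the dimension where the exponent $p=d/(d-2)$ equals $2$, so that $\|\rho\|_{L^p}$ pairs with the $L^2$ norm of $\nabla u$ supplied by the energy estimate; this is why the argument closes only in $\mathbb R^4$. Multiplying, $c_4\cdot 4L_{0,4}^{1/2}=2\sqrt3\,L_{0,4}^{1/2}$, whence $q(n)<0$ for $n>12L_{0,4}\|g\|^2_{\dot H^{-1}}/\nu^4$; with Cauchy--Schwarz alone you would get $16$ in place of $12$.

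Second, your route to asymptotic compactness would fail: the equation is an ODE in $\dot H^1_\sigma$, since the right-hand side $-\nu u+(-\Delta)^{-1}P\bigl(g-(u,\nabla)u\bigr)$ maps $\dot H^1_\sigma$ into itself with no gain of derivatives, so there is no smoothing and testing with $-\Delta u$ cannot produce higher-order regularity or local strong compactness. The paper instead obtains asymptotic compactness by the energy method of Moise--Rosa--Wang, which relies only on the validity of the energy identity \eqref{2} together with weak convergence, and requires no extra regularity.
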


 \setcounter{equation}{0}
\section{Attractor of the Navier--Stokes--Voigt system } \label{sec2}
Note that the formal multiplication of \eqref{1}  by $u$ followed
by integration by parts gives
\begin{equation}\label{2}
\frac12\frac d{dt}\|\nabla u\|^2_{L^2}+\nu\|\nabla u\|^2_{L^2}=
(g,u)\le\frac1{2\nu}\|g\|^2_{\dot H^{-1}_\sigma}+\frac\nu2\|\nabla u\|^2_{L^2},
\end{equation}
where
$\dot H^{-1}_\sigma$ is the dual
space to $\dot H^1_\sigma$. Crucial for what follows is the
Sobolev  inequality:
\begin{equation}\label{3}
\dot H^1_\sigma\hookrightarrow L^4,\ \ \|u\|_{L^4}\le C_{Sob}\|\nabla u\|_{L^2},
\end{equation}
as well as its highly non-trivial generalization
to  systems of functions with orthonormal gradients in Theorem~\ref{T:Lieb_suborth}.
From the energy identity \eqref{2}, we get the control of the
$\dot H^1_\sigma$-norm of a solution:
\begin{equation}\label{4}
\|\nabla u(t)\|^2_{L^2}\le \|\nabla u_0\|^2_{L^2}e^{-\nu t}+
\frac{1-e^{-\nu t}}{\nu^2}\|g\|^2_{\dot H^{-1}_\sigma}.
\end{equation}
This suggests how to define an energy solution for problem
\eqref{1}. Namely, a function $u$ is a weak energy solution of
problem \eqref{1} if $u\in C([0,T],\dot H^1_\sigma)$ and
\eqref{1} is satisfied in the sense of solenoidal distributions,
namely, for every $\varphi\in C_{0,\sigma}^\infty([0,T]\times\mathbb R^4)$,
\begin{multline}\label{5}
\int_{\mathbb R^4}(\nabla u(t),\partial_t\nabla\varphi(t))\,dt+
\nu\int_{\mathbb R^4}(\nabla u(t),\nabla \varphi(t))\,dt+\\+
\int_{\mathbb R^4}((u(t),\nabla)u(t),\varphi(t))\,dt=
\int_{\mathbb R^4}(g,\varphi(t))\,dt.
\end{multline}
In particular, equation \eqref{1} can be written as an identity in
the space
$C([0,T],\dot H^{-1}_\sigma)$ as follows:
\begin{equation}\label{6}
-\Delta \partial_t u+\nu\Delta u+P(u,\nabla)u=Pg,
\end{equation}
where $P$ is the Leray--Helmholtz projector. Indeed, using the
 formula $(u,\nabla)u=\div(u\otimes u)$ and embedding
\eqref{3}, we see that $u\otimes u\in L^2$ and therefore,
$(u,\nabla)u\in \dot H^{-1}_\sigma$, so all terms in \eqref{6} belong
to $\dot H^{-1}_\sigma$. In turn, this justifies the multiplication
of equation \eqref{6} by $u$, so any weak energy solution of
\eqref{1} satisfies the energy identity \eqref{2}.
\par
We are now ready to discuss the wellposedness of the  problem \eqref{1}.
\begin{theorem}\label{Th:1} Let $g\in\dot H^{-1}_\sigma$. Then, for every
$u_0\in\dot H^1_\sigma$, there exists a unique energy solution
$u(t)$ of the problem \eqref{1}. Moreover, this problem generates a
dissipative semigroup $S(t)$ in the phase space $\dot H^1_\sigma$.
\end{theorem}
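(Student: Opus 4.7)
The plan is to reinterpret system \eqref{1} as an ordinary differential equation in the Banach space $\dot H^1_\sigma$ and apply the Picard--Lindel\"of theorem. Applying $(-\Delta)^{-1}P$ to the identity \eqref{6} formally produces
\begin{equation*}
\partial_t u = -\nu u + (-\Delta)^{-1}P\bigl(g-(u,\nabla)u\bigr) =: F(u),
\end{equation*}
where $(-\Delta)^{-1}P\colon \dot H^{-1}_\sigma\to\dot H^1_\sigma$ is an isometric isomorphism. The critical Sobolev embedding \eqref{3} gives $\|u\otimes u\|_{L^2}\le C_{Sob}^2\|\nabla u\|_{L^2}^2$, so $(u,\nabla)u=\div(u\otimes u)\in\dot H^{-1}_\sigma$ and $F$ maps $\dot H^1_\sigma$ into itself. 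The bilinear identity $u\otimes u-v\otimes v=(u-v)\otimes u+v\otimes(u-v)$, combined with H\"older's inequality and \eqref{3}, then shows that $F$ is Lipschitz on every bounded subset of $\dot H^1_\sigma$.

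Consequently, for each $u_0\in\dot H^1_\sigma$ the Picard iteration produces a unique maximal solution $u\in C^1([0,T_{\max}),\dot H^1_\sigma)$. Since every term of \eqref{6} lies in $\dot H^{-1}_\sigma$ along this $u$, it is in fact a weak energy solution in the sense of \eqref{5}; conversely, any weak energy solution satisfies the same ODE and so must coincide with it. Pairing \eqref{6} with $u$ is now rigorous and yields both the energy identity \eqref{2} and the a priori bound \eqref{4}. In particular $\|\nabla u(t)\|_{L^2}$ is bounded on every finite interval, which rules out blow-up and gives $T_{\max}=+\infty$.

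Continuity of the resulting semigroup $S(t)$ on $\dot H^1_\sigma$ follows from a standard Gr\"onwall estimate on the difference $w=u_1-u_2$ of two solutions: using $\div u_2=0$ to eliminate one of the nonlinear contributions and H\"older with \eqref{3} for the other, one obtains
\begin{equation*}
\tfrac12\tfrac{d}{dt}\|\nabla w\|_{L^2}^2+\nu\|\nabla w\|_{L^2}^2\le C_{Sob}^2\|\nabla u_1\|_{L^2}\|\nabla w\|_{L^2}^2.
\end{equation*}
Dissipativity of $S(t)$ is then immediate from \eqref{4}, which exhibits any ball in $\dot H^1_\sigma$ of radius slightly larger than $\|g\|_{\dot H^{-1}_\sigma}/\nu$ as uniformly absorbing, with explicit entrance time. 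The only genuinely non-routine feature is what makes dimension $d=4$ special here: the critical Sobolev exponent is \emph{exactly} $4$, so the quadratic nonlinearity $u\otimes u$ lands in $L^2$ with no room to spare, and hence $(u,\nabla)u$ lands in $\dot H^{-1}_\sigma$ with the same regularity as $\partial_t(-\Delta u)$. In any other dimension this matching fails and a parabolic smoothing or interpolation step would be needed, whereas here the entire well-posedness question collapses to a textbook Banach-space ODE.
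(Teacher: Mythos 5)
Your argument is correct, and for the existence part it takes a genuinely different (and more self-contained) route than the paper. The paper proves uniqueness exactly as you do --- a Gr\"onwall estimate for the difference of two solutions based on the critical embedding \eqref{3} --- and obtains dissipativity from the absorbing ball furnished by \eqref{4}; but for existence it merely appeals to a Galerkin or Leray--$\alpha$ approximation scheme and leaves the passage to the limit to the reader, remarking only that the \emph{approximations} are ODEs in $\dot H^1_\sigma$. You instead observe that in $d=4$ the equation \emph{itself} is already an ODE in $\dot H^1_\sigma$: since $\dot H^1(\mathbb R^4)\hookrightarrow L^4$ is exactly critical, the map $u\mapsto(-\Delta)^{-1}P\operatorname{div}(u\otimes u)$ is a locally Lipschitz quadratic map of the phase space into itself, so Picard--Lindel\"of gives local existence and uniqueness in one stroke, and the a priori bound \eqref{4} excludes blow-up. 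This buys a construction with no compactness argument and makes explicit the special role of dimension four that the paper only gestures at. The one step you should spell out is the ``conversely'' clause identifying arbitrary weak energy solutions with ODE solutions: a weak energy solution is a priori only in $C([0,T],\dot H^1_\sigma)$, and you must first note that every term of \eqref{6} other than $-\Delta\partial_t u$ lies in $C([0,T],\dot H^{-1}_\sigma)$, whence $\partial_t u\in C([0,T],\dot H^1_\sigma)$ and $u$ is a genuine $C^1$ solution of your ODE. This is exactly the observation the paper uses to justify multiplying \eqref{6} by $u$; it is routine, but without it your uniqueness statement would hold only within the class of Picard solutions rather than in the class of weak energy solutions for which the theorem is asserted.
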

\begin{proof} The existence of a solution can be verified in a
standard way using, for instance,  the proper modification of the Galerkin
method or Leray-$\alpha$ approximations (for which the existence is
obvious since they actually are  ODEs in $\dot H^1_\sigma$), so we
leave these standard arguments to the reader. Dissipativity would
follow from estimate \eqref{4} (which guarantees the existence of
the proper absorbing ball in the phase space) if we prove the
uniqueness.
\par
Let $u_1(t)$ and $u_2(t)$ be two energy solutions. Then
the difference $v(t):=u_1(t)-u_2(t)$ solves
\begin{equation*}\label{7}
-\partial_t \Delta v+P((u_1(t),\nabla)v+(v,\nabla)u_2(t))=\nu\Delta v,
\end{equation*}
and the multiplication by $v$ gives
\begin{multline*}
\frac12\frac d{dt}\|\nabla v(t)\|^2_{L^2}+\nu\|\nabla v(t)\|^2_{L^2}\le
 \|\nabla u_2(t)\|_{L^2}\|v\|^2_{L^4}\\
 \le C_{Sob}^2\|\nabla u_2(t)\|_{L^2}\|\nabla v\|^2_{L^2},
\end{multline*}
and the Gronwall inequality together with the control for the
solution $u_2$ provided by estimate \eqref{4} gives the desired
uniqueness as well as  Lipschitz continuity with respect to the
initial data. This finishes the proof of the theorem.
\end{proof}

Note that, arguing analogously, we may prove
that the solution semigroup $S(t)$ is not only Lipschitz continuous,
but also $C^\infty$ with respect to the initial data.
\par
Turning to attractors we recall the classical definition
\cite{BV, Tem,ZUMN}.
\begin{definition} A set
$\mathscr A$ is a global attractor for the semigroup
$S(t)$ acting in a phase space $\dot H^1_\sigma$ if
\par
1) The set $\mathscr A\subset\dot H^1_\sigma$ is compact
 and is strictly invariant:
 $$
 S(t)\mathscr A=\mathscr A,\quad t\ge0;
 $$
\par
2) It attracts the images of all bounded sets
in $\dot H^1_\sigma$ as time goes to infinity,
that is, for every bounded set $B$ and every
neighbourhood $\mathscr O(\mathscr A)$ there exists a
time moment $T=T(\mathscr O,B)$ such that
$$
S(t)B\subset\mathscr O(\mathscr A)
$$
for all $t\ge T$.
\end{definition}
\begin{theorem}\label{Th:2}
Let $g\in\dot H^{-1}_\sigma$. Then the solution semigroup $S(t)$
associated with the problem \eqref{1}
possesses a global attractor $\mathscr A$ in the phase
space $\dot H^1_\sigma$.
\end{theorem}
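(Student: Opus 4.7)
The strategy is the classical two-step recipe for attractors: exhibit a bounded absorbing set in $\dot H^1_\sigma$, and prove that $S(t)$ is asymptotically compact. Estimate \eqref{4} already supplies an absorbing ball, so the substantial point is asymptotic compactness: given any bounded sequence $\{u_0^n\}\subset\dot H^1_\sigma$ and $t_n\to\infty$, show that $\{S(t_n)u_0^n\}$ is precompact in $\dot H^1_\sigma$. Two obstacles arise from the setting: the domain $\mathbb R^4$ admits no global compact Sobolev embedding, and the Voigt equation is not parabolic and provides no spatial smoothing. I therefore plan to use Ball's energy method, which sidesteps both difficulties by relying only on weak convergence together with the energy identity.

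\textbf{Step 1: a complete bounded backward trajectory.} I shift time, setting $v_n(s):=S(t_n+s)u_0^n$ for $s\in[-t_n,0]$. Dissipativity bounds $\|\nabla v_n(s)\|_{L^2}$ uniformly in $n$ and $s$. Rewriting \eqref{6} in the equivalent form
\[
\partial_t u=-\nu u+(-\Delta)^{-1}\bigl[Pg-P(u,\nabla)u\bigr]
\]
and using $\dot H^1_\sigma\hookrightarrow L^4$ (so $u\otimes u\in L^2$ and $P(u,\nabla)u\in\dot H^{-1}_\sigma$), I also see that $\partial_t v_n$ is uniformly bounded in $\dot H^1_\sigma$. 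Hence $v_n$ is equi-Lipschitz in $s$ with values in $\dot H^1_\sigma$. Combined with the compactness of the embedding $H^1(B_R)\hookrightarrow L^p(B_R)$ for $p<4$, Arzel\`a--Ascoli together with a diagonal extraction yields a subsequence such that $v_n(s)\rightharpoonup v^*(s)$ in $\dot H^1_\sigma$ for every $s\le 0$ and $v_n\to v^*$ strongly in $C_{\mathrm{loc}}((-\infty,0];\,L^p_{\mathrm{loc}}(\mathbb R^4))$ for $p<4$. The local strong convergence is exactly what is needed to pass to the limit in the quadratic term against compactly supported test fields in \eqref{5}, so $v^*$ is itself a weak energy solution on $(-\infty,0]$, uniformly bounded in $\dot H^1_\sigma$, and hence satisfies the energy identity \eqref{2}.

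\textbf{Step 2: closing via the energy identity.} Multiplying \eqref{2} by $e^{2\nu t}$ and integrating from $-T$ to $0$ gives, for any weak energy solution $u$,
\[
\|\nabla u(0)\|^2_{L^2}=e^{-2\nu T}\|\nabla u(-T)\|^2_{L^2}+2\int_{-T}^0 e^{2\nu s}(g,u(s))\,ds.
\]
Applying this to $v_n$ and to $v^*$: because $g\in\dot H^{-1}_\sigma$, the weak convergence $v_n(s)\rightharpoonup v^*(s)$ in $\dot H^1_\sigma$ implies $(g,v_n(s))\to(g,v^*(s))$ pointwise in $s$, and dominated convergence transfers this to the time integral, while the boundary term is $O(e^{-2\nu T})$ uniformly in $n$. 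Hence
\[
\limsup_{n\to\infty}\|\nabla v_n(0)\|^2_{L^2}\le Ce^{-2\nu T}+\|\nabla v^*(0)\|^2_{L^2}-e^{-2\nu T}\|\nabla v^*(-T)\|^2_{L^2},
\]
and sending $T\to\infty$ yields $\limsup_n\|\nabla v_n(0)\|^2_{L^2}\le\|\nabla v^*(0)\|^2_{L^2}$. Weak lower semicontinuity supplies the matching $\liminf$-bound, so $\|\nabla v_n(0)\|_{L^2}\to\|\nabla v^*(0)\|_{L^2}$. In the Hilbert space $\dot H^1_\sigma$ weak plus norm convergence forces strong convergence, giving $S(t_n)u_0^n\to v^*(0)$ in $\dot H^1_\sigma$---the required asymptotic compactness. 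Combined with the absorbing ball this yields the global attractor $\mathscr A$ by the standard abstract criterion \cite{BV,Tem,ZUMN}.

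\textbf{Main obstacle.} I expect the most delicate step to be the identification of $v^*$ as a weak solution: this is the point at which the absence of global spatial compactness on $\mathbb R^4$ bites, and it has to be resolved by combining local $L^p$ strong convergence (from Rellich on balls) with the equi-Lipschitz time bound derived from the equation, so that the quadratic nonlinearity converges in the sense of distributions on each compact set. Once this is in place, the energy identity does the rest automatically, with no need for explicit tail estimates.
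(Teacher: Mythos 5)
Your proposal is correct and follows essentially the same route as the paper: an absorbing ball from estimate \eqref{4} plus asymptotic compactness via the energy method of \cite{MoiseRosaWang,Rosa}. The paper merely cites that method as "standard," whereas you execute it in full (complete backward trajectory, passage to the limit in the nonlinearity via local compactness, and the weighted energy identity to upgrade weak to strong convergence), and your execution is sound.
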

\begin{proof}
 According to the general theory, to establish the existence of a global
 attractor, we need to check the dissipativity (done in Theorem~\ref{Th:1}),
 the continuity with respect to
 the initial data (also done in Theorem~\ref{Th:1}) and the asymptotic compactness.
 The latter means that for every sequence $u_0^n$ of the initial data
 belonging to the absorbing ball and every sequence $t_n\to\infty$,
 the sequence $S(t_n)u_0^n$ is precompact in $\dot H^1_\sigma$.
\par
Since the energy identity \eqref{2} is verified, the asymptotic
compactness can be established in a standard way by using the so-called
energy method, see, for instance,  \cite{MoiseRosaWang, Rosa, Tem}. Thus, the theorem is proved.
\end{proof}
We are now ready to turn to the main result of this work, namely,
the upper bounds for the dimension of the attractor. We recall that
for each $\varepsilon>0$ any compact set $\mathscr A\subset\dot
H^1_\sigma$ can be covered by a finite number of
$\varepsilon$-balls in $\dot H^1_\sigma$. Let $N(\varepsilon)$ be
the minimal number of such balls. Then the (upper) fractal
dimension of the set $\mathscr A$ is defined via
\begin{equation*}\label{9}
\dim_f\mathscr A:=\limsup_{\varepsilon\to0}
\frac{\log_2 N(\varepsilon)}{\log_2\frac1\varepsilon},
\end{equation*}
see \cite{Tem,Rob} for the details.

We are now in position to prove the main result of this short note,
namely, Theorem~\ref{Th:3}

\begin{proof}[Proof of Theorem~\ref{Th:3}]
 We will use the volume contraction method for
 estimating the dimension of the attractor. This method is
 based on the nontrivial fact that if the  linearization of the  $C^1$-smooth
 semigroup contracts infinitesimal $n$-dimensional volumes
 on the attractor, then the fractal dimension of the attractor
 is less than $n$, see \cite{Ch-I2001,Ch-I,Tem} for the details.
\par
 To estimate the volume contraction factor, a version of
 the Liouville formula is used for the equation of variations
 associated with a trajectory $u(t)$ on the attractor:
 $$
 \partial_t v=(-\Delta)^{-1}P(-(u(t),\nabla )v-(v,\nabla) u(t)+\nu\Delta v)=:\mathcal L_{u(t)}v.
 $$
 Then, by the Liouville theorem, the $n$-dimensional contraction
 factor on the attractor possesses the following estimate:
 $$
 \omega_n(S(t),\mathscr A)\le e^{q(n)},\ \
 q(n):=\sup_{u(0)\in\mathscr A}
 \limsup_{T\to\infty}\frac1T\int_0^T\operatorname{Tr}_n\mathcal L_{u(t)}\,dt,
 $$
where the $n$-dimensional trace is defined as follows:
 $$
 \operatorname{Tr}_n\mathcal L_{u(t)}:=
 \sup_{\{v_i\}_{i=1}^n}\sum_{i=1}^n (\nabla\mathcal L_{u(t)}v_i,\nabla v_i),
 $$
 and the supremum is taken with respect to all
 orthonormal systems in $\dot H^1_\sigma$:
 $(\nabla v_i,\nabla v_j)=\delta_{ij}$,
 see \cite{Tem},
so that
 $$
 q(n^*)<0\Rightarrow\dim_f\mathscr A<n^*.
$$

   Thus, we only need to estimate the above $n$-dimensional
 trace. Indeed, using the orthonormality of the $\nabla v_i$'s, we get
 $$
\aligned
\sum_{i=1}^n (\nabla\mathcal L_{u(t)}v_i,\nabla v_i)=
\sum_{i=1}^n(\mathcal L_{u(t)}v_i,-\Delta v_i)\\=
-\nu n-
\sum_{i=1}^n((v_i,\nabla) u(t),v_i)\le\\\le
-\nu n+c_4\int_{\mathbb R^4}\rho(x)|\nabla u(t,x)|\,dx\\
\le -\nu n+c_4\|\nabla u(t)\|\|\rho\|,
\endaligned
 $$
 where $\rho(x):=\sum\limits_{i=1}^d |v_i(x)|^2$.
 Next, we use \eqref{c4}, \eqref{d3} and \eqref{4} to obtain
$$
q(n)\le n^{1/2}
\left(-\nu n^{1/2}+\frac{2\sqrt{3}L_{0,4}^{1/2}\|g\|_{\dot H^{-1}}}\nu\right),
$$
which proves the  estimate~\eqref{10} and completes  the proof of
the theorem, taking into account  explicit bounds  for the
constants  $L_{0,4}$ and $c_4$ given in the next section.
\end{proof}

 \setcounter{equation}{0}
\section{Inequalities } \label{sec3}
We start with the constant $c_4$ and point out that if $A$ is a
$d\times d$-matrix with zero trace, then~\cite{Lieb, IKZ22}
\begin{equation}\label{c4}
(Av,v)\le c_d\|A\||v|^2,\quad c_d=\sqrt{(d-1)/d}.
\end{equation}

The following result~\cite{LiebJFA} (see also~\cite{IKZIZV}),
namely, the estimate for $\rho$ played
the crucial role  in Theorem~\ref{Th:3}.
\begin{theorem}
\label{T:Lieb_suborth} Let $d\ge3$, $p=d/(d-2)$, and let $\{v_i\}_{i=1}^n\in
\dot H^1_0(\Omega)$, $\Omega\subseteq R^d$ make up a system of vector
functions with orthonormal  gradients in $L_2(\Omega)$:
$(\nabla v_i,\nabla v_j)=\delta_{i\,j}$.
Then  the function
$
\rho(x):=\sum\limits_{j=1}^n|v_j(x)|^2
$
satisfies
\begin{equation}\label{d3}
\|\rho\|_{L_p}\le (dL_{0,d})^{2/d}\frac d{d-2}n^{(d-2)/d},
\end{equation}
where $L_{0,d}$ is the constant in the Cwikel--Lieb--Rozenblum
bound for the number $N(0,-\Delta-V)$ of  negative eigenvalues of
the Schr\"odinger operator $-\Delta-V$, $V(x)\ge0$ in $\mathbb
R^d$, see~\cite{Cwikel,L,R}:
\begin{equation*}\label{CLRbound}
N(0,-\Delta-V)\le L_{0,d}\int_{\mathbb R^d}V(x)^{d/2}dx.
\end{equation*}
\end{theorem}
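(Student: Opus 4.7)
The plan is to apply the Constantin--Foias--Temam volume contraction machinery already set up in the proof (equation of variations, Liouville formula, trace functional $q(n)$): it suffices to exhibit an explicit $n^{*}$ with $q(n^{*})<0$, which will give $\dim_{f}\mathscr{A}<n^{*}$, and to pick $n^{*}$ so that the resulting inequality reads $n^{*}\le 12L_{0,4}\|g\|^{2}_{\dot H^{-1}}/\nu^{4}$.

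Concretely, I fix a trajectory $u(\cdot)\subset\mathscr{A}$ and an orthonormal family $\{v_{i}\}_{i=1}^{n}$ in $\dot H^{1}_{\sigma}$, i.e.\ $(\nabla v_{i},\nabla v_{j})=\delta_{ij}$. Using $((-\Delta)^{-1}\phi,-\Delta v_{i})_{L^{2}}=(\phi,v_{i})_{L^{2}}$ and divergence-freeness of $u$, I compute
\begin{equation*}
\sum_{i=1}^{n}(\nabla\mathcal{L}_{u(t)}v_{i},\nabla v_{i})
=-\nu n-\sum_{i=1}^{n}((v_{i},\nabla)u(t),v_{i})_{L^{2}},
\end{equation*}
since the transport term $((u,\nabla)v_{i},v_{i})$ vanishes pointwise after integration by parts. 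This is the step already sketched in the paper; the $-\nu n$ comes from $\sum_{i}(\nu\Delta v_{i},v_{i})=-\nu n$.

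The main bound is on the remaining ``stretching'' term. Pointwise, writing $A(x):=\nabla u(t,x)$ (a traceless $4\times 4$ matrix by $\div u=0$), inequality \eqref{c4} with $d=4$ gives $((v_{i},\nabla)u,v_{i})(x)=(A(x)v_{i},v_{i})\le c_{4}|\nabla u(x)||v_{i}(x)|^{2}$, so
\begin{equation*}
\Bigl|\sum_{i=1}^{n}((v_{i},\nabla)u,v_{i})_{L^{2}}\Bigr|
\le c_{4}\int_{\mathbb{R}^{4}}|\nabla u(x)|\rho(x)\,dx
\le c_{4}\|\nabla u(t)\|_{L^{2}}\|\rho\|_{L^{2}},
\end{equation*}
by Cauchy--Schwarz. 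This is where dimension four is crucial: the Sobolev exponent $p=d/(d-2)$ in Theorem~\ref{T:Lieb_suborth} equals $2$ precisely when $d=4$, so the $L^{p}$ norm of $\rho$ supplied by that theorem matches the $L^{2}$ dictated by Cauchy--Schwarz. Applying \eqref{d3} with $d=4$ yields $\|\rho\|_{L^{2}}\le(4L_{0,4})^{1/2}\cdot 2\cdot n^{1/2}=4L_{0,4}^{1/2}n^{1/2}$, and combining with $c_{4}=\sqrt{3}/2$ gives the stated pointwise trace bound $\operatorname{Tr}_{n}\mathcal{L}_{u(t)}\le -\nu n+2\sqrt{3}\,L_{0,4}^{1/2}n^{1/2}\|\nabla u(t)\|_{L^{2}}$.

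To close, I average in time and use the dissipative estimate \eqref{4}, which yields $\limsup_{T\to\infty}\tfrac{1}{T}\int_{0}^{T}\|\nabla u(t)\|_{L^{2}}\,dt\le\|g\|_{\dot H^{-1}}/\nu$ on the attractor, so
\begin{equation*}
q(n)\le n^{1/2}\Bigl(-\nu n^{1/2}+\tfrac{2\sqrt{3}\,L_{0,4}^{1/2}\|g\|_{\dot H^{-1}}}{\nu}\Bigr).
\end{equation*}
Solving $q(n)<0$ gives the threshold $n>12L_{0,4}\|g\|^{2}_{\dot H^{-1}}/\nu^{4}$, whence \eqref{10}; the numerical bound $0.23$ follows from inserting the explicit value of $L_{0,4}$ quoted in Section~\ref{sec3}. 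I expect the only delicate point to be the rigorous justification that the semigroup is $C^{1}$-smooth on $\dot H^{1}_{\sigma}$ and that the trace formula applies in this unbounded-domain setting; but this is a direct consequence of the remark after Theorem~\ref{Th:1} that $S(t)$ is in fact $C^{\infty}$ in the initial data, together with the abstract framework in \cite{Ch-I2001,Ch-I,Tem}.
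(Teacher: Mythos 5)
Your proposal does not prove the stated theorem. The statement under review is Theorem~\ref{T:Lieb_suborth}, namely the $L^p$ bound \eqref{d3} for $\rho=\sum_j|v_j|^2$ built from a family with orthonormal gradients. What you have written is instead a proof of Theorem~\ref{Th:3} (the attractor dimension estimate \eqref{10}), and in the course of it you explicitly \emph{invoke} \eqref{d3} with $d=4$ to obtain $\|\rho\|_{L^2}\le 4L_{0,4}^{1/2}n^{1/2}$. With respect to the assigned statement the argument is therefore circular: the inequality you were asked to establish enters only as an unproved black box.

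A proof of Theorem~\ref{T:Lieb_suborth} requires a different set of ideas, none of which appear in your text. The standard route (this is Lieb's argument) is to write $v_i=(-\Delta)^{-1/2}u_i$ with $\{u_i\}$ orthonormal in $L^2$, test $\rho$ by duality against an arbitrary $0\le V\in L^{p'}$ with $p'=d/2$, recognize $\int V\rho\,dx$ as the trace of the Birman--Schwinger operator $(-\Delta)^{-1/2}V(-\Delta)^{-1/2}$ over an $n$-dimensional subspace, control the eigenvalue counting function of that operator via the Cwikel--Lieb--Rozenblum bound $N(0,-\Delta-V)\le L_{0,d}\int V^{d/2}$, and optimize over the spectral cut-off to produce the constant $(dL_{0,d})^{2/d}\tfrac{d}{d-2}$ and the power $n^{(d-2)/d}$. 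The paper itself does not reproduce this argument either; it attributes the theorem to \cite{LiebJFA} (see also \cite{IKZIZV}). As a side remark, the portion of your text that reconstructs the proof of Theorem~\ref{Th:3} does follow the paper's argument for that theorem essentially verbatim, including the observation that $p=d/(d-2)=2$ precisely when $d=4$ and the value $c_4=\sqrt{3}/2$; but that is not the statement you were asked to prove.
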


\begin{remark}\label{R:CLR_constant}
{\rm The constant $L_{0,d}$ is traditionally compared with its
semiclassical lower  bound
$$
L_{0,d}\ge L_{0,d}^{\mathrm{cl}}:=\frac{\omega_d}{(2\pi)^d}\,.
$$
The best to date bound for $L_{0,4}$ is Lieb's bound~\cite{L}
$$L_{0,4}\le 6.034\cdot L_{0,4}^{\mathrm{cl}}=0.0032\dots.
$$
For the recent progress in higher dimensions $d\ge5$ see \cite{Hund}.
}
\end{remark}

\subsection*{Acknowledgement}
This work was supported by Moscow Center of Fundamental and Applied Mathematics,
Agreement with the Ministry
of Science and Higher Education of the Russian Federation, No. 075-15-2025-346.


\begin{thebibliography}{99}


\bibitem{BV} A.V. Babin, M.I. Vishik.
    \emph{Attractors of evolution equations}. Nauka,
    Moscow, \textrm 1988; \textrm {English transl.}
 North-Holland, Amsterdam, 1992.




\bibitem{Ch-I2001}
 V.V. Chepyzhov,  A.A. Ilyin. {A note on the fractal dimension
of attractors of dissipative dynamical systems}. \textit{Nonlinear
Anal.} \textbf{44} (2001), 811--819.



\bibitem{Ch-I} V.V. Chepyzhov,  A.A. Ilyin. {On the fractal
    dimension of invariant sets; applications to Navier--Stokes
    equations}. \textit{Discrete Contin. Dyn. Syst.}
    \textbf{10}:1-2  (2004), 117--135.


\bibitem{ChepTitiVishik}
V.V. Chepyzhov, E.S. Titi, M.I. Vishik. On the convergence of
solutions of the Leray-$\alpha$ model to the trajectory attractor
of the 3D Navier--Stokes system.
 \textit{Discrete Contin. Dyn. Syst.}
 \textbf{17} (2007),  481--500.



\bibitem{CHOT}
A. Cheskidov, D.D. Holm, E. Olson, E.S. Titi.
On a Leray-$\alpha$ model of turbulence.
\textit{Royal Soc. A --- Math. Phys.  Eng. Sci.} {\bf 461} (2005), 629--649.



\bibitem{CotiGal}
M. Coti Zelati, C.Gal.
Singular limits of Voigt models in fluid dynamics.
\textit{J. Math. Fluid Mech.} \textbf{17}:2 (2015),  233--259.

\bibitem{Cwikel} M. Cwikel.
Weak type estimates for singular values and the number of bound states
of Schr\"odinger operators. \textit{Ann. Math.} \textbf{106} (1977), 93--100.



\bibitem{FHT} \textrm{C. Foias, D.D. Holm, E.S. Titi.}
\textrm{The three dimensional viscous Camassa--Holm equations, and their relation to the
Navier--Stokes equations and turbulence theory. } \textit{J. Dynam. Differential Equations}
 \textbf{14}, 1--35 (2002).



\bibitem{Hund} D. Hundertmark, P. Kunstmann, T. Ried,
    S. Vugalter. Cwikel's bound reloaded. \textit{Invent. Math.}
    \textbf{231} (2023), 111--167.



 \bibitem{Lieb90}
A.A. Ilyin, A.G. Kostianko, S.V.  Zelik. Applications of the
Lieb--Thirring and other bounds for orthonormal systems in
mathematical hydrodynamics. \textit{The Physics and Mathematics of
Elliott Lieb. The 90th Anniversary. Volume~I.} Edited by R. L.
Frank, A. Laptev, M. Lewin, R. Seiringer
 European Mathematical Society,  EMS  Publishing House
 Berlin 2022,  582--608.   

\bibitem{IKZIZV}
\textrm{A.A. Ilyin, A.G. Kostianko,  S.V. Zelik.}
\textrm{Sharp bounds on the attractor dimensions  for  damped wave equations.}
arXiv:2409.1880.



\bibitem{IKZ22}
 \textrm{A.A. Ilyin, A.G. Kostianko, S.V. Zelik.}
    \textrm{Sharp upper and lower bounds of the attractor
    dimension for   3D damped Euler--Bardina equations.}
    \textrm{Physica D} \textbf{432}  (2022),  133156.  


\bibitem{IZLob}
\textrm{A.A. Ilyin, S.V. Zelik.}
\textrm{Attractors for the Navier--Stokes--Voight equations and their dimension.}
\textit{Lobachevskii Journal of Mathematics} \textbf{46} (2025) to appear;
arXiv:2503.2076.









\bibitem{IZLap70} A.A. Ilyin,  S.V. Zelik.
 Sharp dimension  estimates of the attractor  of
 the damped  2D Euler-Bardina equations.
In book: \textrm{Partial Differential Equations, Spectral Theory,
    and Mathematical Physics}, European Math. Soc. Press, Berlin, ---  2021, P.209--229.


\bibitem{TitiVarga} V.K. Kalantarov, E.S. Titi.
Global attractors and determining modes for the 3D
 Navier--Stokes--Voight equations.
 \textit{Chin. Ann. Math. Ser. B} \textbf{30} (2009), 697--714.



\bibitem{L} \textrm{E.H. Lieb}. \textrm{Bounds on the eigenvalues
    of the Laplace and Schr\"odinger operators.} \textit{Bull.
    Amer. Math. Soc.}  \textbf{82} (1976), 751--753. See
    also: \textrm{The number of bound states of one body
    Schr\"odinger operators and the Weyl problem.} \textit{Proc.
    A.M.S. Symp. Pure Math.} \textbf{36} (1980), 241--252.



\bibitem{Lieb}
E. Lieb. On characteristic exponents in turbulence. \textit{Comm.
Math.  Phys.} \textbf{92} (1984)
 473--480.   

 \bibitem{LiebJFA} E.H. Lieb.  An $L^p$ bound for the
    Riesz and Bessel potentials of orthonormal functions.
\textit{J. Func. Anal.}  \textbf{51}  (1983), 159--165.   


\bibitem{MoiseRosaWang}
\textrm{I. Moise, R. Rosa, X. Wang.}
\textrm{Attractors for non-compact semigroups via energy equations.}
\textit{Nonlinearity} \textbf{11} (1998), 1369--1393.


\bibitem{Oskol}
A.P. Oskolkov.
The uniqueness and solvability in the large of boundary value problems for the
equations of motion of aqueous solutions of polymers.
\textit{Zap. Nau\v{c}n. Sem. Leningrad. Otdel. Mat.
Inst. Steklov. (LOMI)}, \textbf{38} (1973),  98--136;
translation in \textit{Boundary value problems of mathematical physics and
related questions in the theory of functions}, \textbf{7}.

\bibitem{Rob}
\textrm{J. Robinson.}
\textit{Dimensions, Embeddings, and Attractors.}
 Cambridge Tracts in Mathematics,
186. Cambridge University Press, Cambridge, 2011.




\bibitem{Rosa}
\textrm{R. Rosa.}
\textrm{The global attractor for the 2D Navier--Stokes
flow on some unbounded domains.}
\textit{Nonlinear Anal.} \textbf{32} (1998), 71--85.

\bibitem{R} \textrm{G.V. Rozenblum}. \textrm{Distribution of the
    discrete spectrum of singular differential operators.}
    \textit{Dokl. AN SSSR}, \textbf{202} (1972), 1012--1015,
 \textit{Izv. VUZov, Matematika}, \textbf{1} (1976), 75--86.





\bibitem{Tem}
\textrm{R. Temam.} \textit{Infinite Dimensional
    Dynamical Systems in Mechanics and Physics, \rm 2nd ed.}
    \textrm{New York},
\textrm{Sprin\-ger-Ver\-lag}, 1997.  



\bibitem{ZUMN} S.V. Zelik. Attractors. Then and now.
\textit{Russian Math. Surv.} \textbf{78}:4
(2003) 53--198.  

\bibitem{IKZMZ22}
\textrm{S.V. Zelik, A.A. Ilyin, A.G. Kostianko.}
\textrm{Dimension estimates for the attractor of the
regularized damped Euler equations on the sphere.}
\textit{Math. Notes} \textbf{111}:1 (2022), 47--57.

\bibitem{ZIMS23}
\textrm{ S.V. Zelik, A.A. Ilyin.}
 On a class     of interpolation inequalities on the 2D sphere.
 \textit{Sbornik: Mathematics}
  (2023) \textbf{214}:3, 396--410.



\end{thebibliography}
\end{document}